\newcommand\sG{{\mathcal G}}
\newcommand\oo{\infty}
\newcommand\sm{\setminus}
\newcommand\sF{{\mathcal F}}
\newcommand\Om{\Omega}
\newcommand\om{\omega}
\newcommand\Ga{\Gamma}
\newcommand\sB{{\mathcal B}}
\newcommand\resp{respectively}
\newcommand\sym{{\,\triangle\,}}
\newcommand\sM{{\mathcal M}}
\newcommand\sH{{\mathcal H}}
\newcommand\sL{{\mathcal L}}
\newcommand{\RR}{\mathbb{R}}     
\newcommand{\NN}{\mathbb{N}}     
\newcommand{\PP}{\mathbb{P}}     
\newcommand\la{\lambda}
\newcommand\s{\sigma}
\newenvironment{letlist}{\begin{list}{\rm(\alph{mycount})}%
   {\usecounter{mycount}\labelwidth=1cm\itemsep 0pt}}{\end{list}}
\newtheorem{rema}[thm]{Remark}
 \renewcommand\email{Email:\ }
\newcommand\urladdr{URL:\ }
\numberwithin{equation}{section}
\numberwithin{thm}{section}
\numberwithin{figure}{section}
\numberwithin{lemma}{section}
\numberwithin{remark}{section}
\newcounter{mycount}
\newcommand{\qeds}{\hfill$\square$}
\shorttitle{Influence in product spaces}
\begin{document}
\title{Influence in product spaces}

\authorone[University of Cambridge]{Geoffrey  R.\ Grimmett}
\addressone{Statistical Laboratory, Centre for
Mathematical Sciences, Cambridge University, Wilberforce Road,
Cambridge CB3 0WB, UK, \email{\href{mailto:grg@statslab.cam.ac.uk}{\nolinkurl{g.r.grimmett@statslab.cam.ac.uk}}}, \urladdr{\url{http://www.statslab.cam.ac.uk/~grg/}}}

\authortwo[Uppsala University]{Svante Janson}
\addresstwo{Department of Mathematics, Uppsala University,
Box 480, 751 06 Uppsala, Sweden, \email{\href{mailto:svante.janson@math.uu.se}{\nolinkurl{svante.janson@math.uu.se}}},  \urladdr{\url{http://www.math.uu.se/~svante/}}}

\authorthree[University of Cambridge]{James  R.\ Norris}
\addressthree{Statistical Laboratory, Centre for
Mathematical Sciences, Cambridge University, Wilberforce Road,
Cambridge CB3 0WB, UK, \email{\href{mailto:j.r.norris@statslab.cam.ac.uk}{\nolinkurl{j.r.norris@statslab.cam.ac.uk}}}, \urladdr{\url{http://www.statslab.cam.ac.uk/~james/}}}

\begin{abstract}
The theory of influence and sharp threshold is a key tool
in probability and probabilistic combinatorics, with numerous applications. One significant aspect of the theory
is directed at identifying the level of generality of the product probability space that
accommodates the event under study.
We derive the influence inequality for a completely general product space,
by establishing a relationship to the Lebesgue cube studied by  Bourgain, Kahn, Kalai, Katznelson, and Linial (BKKKL)
in 1992. This resolves one of the assertions of BKKKL. 
Our conclusion is valid also in the setting of the generalized influences of Keller.
\end{abstract}


\keywords{Influence, sharp threshold, product space, separable space, measure-space isomorphism.}
\ams{60A10}{28A35}


\section{Introduction}\label{sec:intro}

A coin shows heads with probability $p$.
We flip it $n$ times, and we observe whether or not some specified event $A$ occurs.
In studying the associated probability $P_p(A)$, it is often useful to gain information about the
degrees of influence of the individual coin tosses.  We make this statement more precise as follows.

Let $(X_e: e\in E)$ be independent Bernoulli variables with parameter $p$, where $|E|=n<\oo$.
Let $A \subseteq \Om$ where $\Om=\{0,1\}^E$. For $\om\in\Om$ and $e \in E$,
we define the configurations $\om^e$ and $\om_e$ by
$$
\om^e(f)=\begin{cases}\om(f) &\text{if } f \ne e,\\ 1&\text{if } f = e,\end{cases}\qquad
\om_e(f)=\begin{cases}\om(f) &\text{if } f \ne e,\\ 0&\text{if } f = e.\end{cases}
$$
Thus, the configuration $\om^e$ (\resp, $\om_e$) is
derived from $\om$ by `switching on' (\resp, `switching off') 
the variable indexed by $e$. 
The \emph{influence} of $e\in E$
on the event $A$ is defined by
\begin{equation}\label{eq:infB}
I_A(e) = P_p\bigl(1_A(\om^e) \ne 1_A(\om_e)\bigr),
\end{equation}
where $1_A$ denotes the indicator function of $A$,
and $P_p$ is the appropriate probability measure.
That is, the influence of $e$ is the probability that the occurrence of $A$ depends
on the value of $X_e$. 

A systematic theory of influence seems to have been developed first 
by Kahn, Kalai, and Linial \cite{KKL} in 1988, in response to an issue raised by
Ben-Or and Linial \cite{BOrL}. There was a later development
by Talagrand \cite{Tal94} in 1994.
On the other hand, estimates for influences have
been key to a number of important results in probability and probabilistic combinatorics
that predate these papers, sometimes by many years. Perhaps the most famous
such result is the proof by Kesten \cite{Kes80}
that the critical probability of bond percolation on the square lattice
equals $\frac12$. There are now several known ways of proving this
(see \cite[Chap.\ 5]{G-pgs} and \cite{DT}), but Kesten's first proof of 1980
used a bespoke  theory of influence.

Kahn, Kalai, and Linial
\cite{KKL} introduced an inequality for influences in the case $p=\frac12$,
working thus with uniform measure on the discrete cube $\{0,1\}^n$.
This was extended by Bourgain et al.\ \cite{bkkkl} to an influence inequality
for the continuous cube $[0,1]^n$ endowed with Lebesgue measure.
Using a discretization argument, this implies an influence  inequality
for the Bernoulli case with $p \in (0,1)$. This following formulation
of this inequality is a minor perturbation of that of \cite{bkkkl,KKL},
and is given here in a form suitable for applications (see \cite[Thm 4.29]{G-pgs}).

\begin{thm}\label{gjn0}
There exists a universal constant $c>0$ such that,
for any $p\in(0,1)$,
any finite set $E$, and any event $A\subseteq \{0,1\}^E$ satisfying $P_p(A) \in (0,1)$,
\begin{equation}\label{0.2}
\sum_{e\in E} I_A(e) \ge c P_p(A)(1-P_p(A)) \log(1/m),
\end{equation}
where $m=\max_e I_A(e)$.
\end{thm}

It is immediate that \eqref{0.2} implies the existence of some $e \in E$ with
\begin{equation}\label{0.3}
I_A(e) \ge c' P_p(A)(1-P_p(A)) \frac{\log n}n,
\end{equation}
where $n=|E|$ and $c'>0$ is an absolute constant.

There is a  slightly extended version of inequality \eqref{0.2}
due to Talagrand \cite{Tal94}, which holds under the further
condition that the event in question is  \emph{increasing}.
Since
the set $\{0,1\}$ is ordered, the product space $\{0,1\}^E$ is partially ordered. 
An event $A$ in this space is said to be \emph{increasing}
if, whenever  $\om\in A$, $\om\le\om'$, then $\om'\in A$.
It is proved at \cite[Thm 1.1]{Tal94} that \eqref{0.2} may be replaced by 
\begin{equation}\label{eq:00}
P_p(A)(1-P_p(A)) \le cp(1-p)\log\left[\frac 2{p(1-p)}\right]
\sum_{e\in E} \frac{I_A(e)}{\log[1/(p(1-p)I_A(e))]},
\end{equation}
for an increasing event $A$. 
Using the fact that $I_A(e) \le m:=\max_e I_A(e)$, inequality \eqref{eq:00}
implies that
\begin{equation}\label{0}
\sum_{e\in E} I_A(e) \ge \left(\frac{c^{-1}}{p(1-p)
\log[2/(p(1-p))]}\right)P_p(A)(1-P_p(A))\log(1/m).
\end{equation}
Since $0<p<1$, it follows that
\begin{equation}\label{000}
\sum_{e\in E} I_A(e) \ge c' P_p(A)(1-P_p(A))\log(1/m),
\end{equation}
where $c'>0$ is an absolute constant, in agreement with \eqref{0.2}
(and assuming $A$ is increasing).

The connection between the influences $I_A(e)$ and the probability $P_p(A)$ 
is provided by \emph{Russo's formula},
\begin{equation}\label{eq:Russo}
\frac{d}{dp}P_p(A) = \sum_{e\in E} I_A(e),
\end{equation}
for any increasing event $A$. 
Russo \cite{Ru78}
published his formula in 1978, though versions of this natural equality were known earlier
to Barlow and Proschan \cite[p.\ 210]{BP} and Margulis \cite{Mar}.

Russo's formula \eqref{eq:Russo} may be combined with \eqref{0} or \eqref{000}
to obtain lower bounds for the derivative of $P_p(A)$ for an increasing event $A$.
Numerous applications of this inequality have been found in areas such as percolation and 
random graphs.

Since these three early papers \cite{bkkkl, KKL, Tal94} on influence, 
several strands of theory have been developed.
One is to seek influence theorems for non-product measures, for which we
refer the reader to \cite{GG,GG11}. Another
is towards the question of whether there exists a useful influence inequality for
an event in an arbitrary product space, that is, whether an inequality
of the form \eqref{000} holds with the discrete product space $\{0,1\}^E$
replaced by an arbitrary product probability space. It was asserted in \cite{bkkkl} that the latter
is indeed true, but the explanation was omitted (a natural argument uses the measure-space isomorphism theorem, which normally requires
separability; see Section \ref{ss3}). The purpose of the
current note is to state and prove a general form of  this theorem not requiring separability
(see Theorems \ref{gjn2-} and \ref{gjn2}).

See \cite{KalS} for a review of influence and its ramifications, and also
\cite{GS} and \cite[Sect. 4.5]{G-pgs}.

\section{Statement of results}\label{sec:main}

Let $X=(\Om,\sF,P)$ be a probability space, and let $E$ be a finite set with $|E|=n$. 
We write $X^E = (\Om^E, \sF^E, \PP=P^E)$ for the product space
of $n$ copies of $X$.
For an index $e\in E$ and a vector $\psi\in\Om^{E\sm \{e\}}$, we define
the \emph{fibre}
\begin{align*}
F_\psi &= \{\om\in \Om^E: \om(f)=\psi(f) \text{ for } f \ne e\} \\
&\simeq\{\psi\} \times \Om,
\end{align*}
comprising all $\om \in \Om^E$ which agree with $\psi$ off $e$.

Let $A \in \sF^E$ be an event.
The \emph{influence} of $e$ on $A$ is defined as
\begin{equation}\label{g-def}
I_A(e) = P^{E\sm \{e\}}\bigl(\{\psi\in \Om^{E\sm\{e\}}: 0<P(A \cap F_\psi)<1\}\bigr).
\end{equation} 
For economy of notation, the space $X$ is not listed explicitly in $I_A(e)$.

\begin{rema}\label{rem1}
Bourgain et al.\ \cite{bkkkl} make use of a different definition of influence, which
may be expressed in the current context as
$$
I_A'(e) = P^{E\sm\{e\}}\bigl(1_A \text{\rm\ is not constant on } F_\psi\bigr).
$$
By comparison with \eqref{g-def}, we have that $I_A(e) \le I_A'(e)$.
Therefore, lower bounds for $I_A(e)$ are stronger than their
equivalents for $I'_A(e)$.

An unsatisfactory property of the influence $I_A'(e)$ is that one may have
$I_A'(e) \ne I_{A'}'(e)$ for events $A$, $A'$ that differ by a null set. This
observation provoked the revised definition \eqref{g-def} introduced in \cite{G-pgs}. 
More general notions of influence have been discussed in \cite{Hat, Kell10, Kell12},
to which we return at \eqref{g-def2}.
\end{rema}

Let $\sL$ denotes the Lebesgue probability space comprising
the unit interval $[0,1]$ endowed with the Borel $\s$-field $\sB[0,1]$ and
Lebesgue measure $\la$. Our main result for influences
as defined in \eqref{g-def} is the following. This will
be extended to more general influences in Theorem \ref{gjn2}.

\begin{thm}\label{gjn2-}
Let $|E|<\oo$ and $A \in \sF^E$. There exists a measurable event $B$ in
the Lebesgue product space $\sL^E$
such that $\la^E(B) = \PP(A)$, and
$I_B(e) = I_A(e)$ for $e \in E$.
\end{thm}

It follows that the influences of an arbitrary event in the general product space
satisfy an inequality whenever such an inequality 
holds for a general event in the 
Lebesgue product space.
Since $X$ is not generally a partially ordered set, it would be inappropriate to seek
results restricted to increasing events, and in addition the method of proof will not
necessarily respect an existing partial order.   

We state one corollary of Theorem \ref{gjn2-}, which may be compared with
Theorem \ref{gjn0}. The proof is at the end of Section \ref{sec:pf}.

\begin{thm}\label{gjn}
There exists a universal constant $c>0$ such that,
for any probability space $X=(\Om,\sF,P)$,
any finite set $E$, and any event $A\in \sF^E$ satisfying $\PP(A) \in (0,1)$,
\begin{equation}\label{2}
\sum_{e\in E} I_A(e) \ge c \PP(A)(1-\PP(A)) \log(1/m),
\end{equation}
where $m=\max_e I_A(e)$.
\end{thm}

It is immediate, as at \eqref{0.3},  that there exists $e \in E$ with
\begin{equation}\label{3}
I_A(e) \ge c'\PP(A)(1-\PP(A)) \frac{\log n}n,
\end{equation}
where $n=|E|$ and  $c'>0$ is an absolute constant.
By Remark \ref{rem1}, this is stronger than BKKKL's
\cite[Thm 1]{bkkkl}.

Our principal Theorem \ref{gjn2-} may be extended without substantial extra work
to a more general notion of influence, introduced by Keller \cite{Kell10}.
Let $\sM$ be the set of measurable functions $h:[0,1] \to[0,1]$. For $h \in \sM$, 
the \emph{$h$-influence} of $e \in E$ on the event
$A \in \sF^E$ is defined as
\begin{equation}\label{g-def2}
I_A^h(e) = P^{E\sm \{e\}}\bigl(h(P(A \cap F_\psi))\bigr),
\end{equation}
where $\mu(f)$ denotes the expection of $f$ under the probability measure $\mu$.
Thus $I_A^h(e) = I_A(e)$ when $h$ is the indicator function $1_{(0, 1)}$.
The function $h(x)=x(1-x)$ has been considered in \cite{Hat}, and other functions $h$ in
\cite{Kell10}.

One might define the influence $I_A(e)$ via a conditional expectation rather than the `pointwise'
definitions \eqref{g-def} and \eqref{g-def2}. 
With $\sF^E_e$ the sub-$\s$-field of $\sF^E$ generated by $\{\om(f): f \ne e\}$,
\eqref{g-def2} can be written
$$
I_A^h(e) = P^{E\sm\{e\}}\bigl(h(\PP(A \mid \sF^E_e))\bigr).
$$
However, we retain the notation adopted in the prior literature. 

Our main theorem for $h$-influences is as follows.

\begin{thm}\label{gjn2}
Let $h \in \sM$ and $A \in \sF^E$. There exists a measurable event $B$ in
the Lebesgue product space $\sL^E$
such that $\la^E(B) = \PP(A)$, and
$I_B^h(e) = I_A^h(e)$ for $e \in E$.
\end{thm}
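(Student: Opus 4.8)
The plan is to reduce the statement in two stages: first from the general product space $X^E$ to $([0,1],\sB[0,1],\mu)^E$ for a suitable Borel probability measure $\mu$ on $[0,1]$, and then from the latter to $\sL^E$; the mechanism throughout is that a single event $A\in\sF^E$ depends on only countably much of $\sF$, together with the fact that $[0,1]$ is a measure-preserving image of $\sL$ for every choice of $\mu$. Two classical measure-theoretic facts suffice: \emph{(i)} any member of a generated $\s$-field lies in the sub-$\s$-field generated by countably many of the generators; and \emph{(ii)} for each Borel probability measure $\mu$ on $[0,1]$ there is a Borel map $\tau\colon[0,1]\to[0,1]$ with $\tau_*\la=\mu$ --- for example the quantile map $\tau(u)=\inf\{x\in[0,1]:\mu([0,x])\ge u\}$. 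Recall also that, because $A\in\sF^E$, the map $\psi\mapsto P(A\cap F_\psi)$ is $\sF^{E\sm\{e\}}$-measurable by Tonelli's theorem, so that $I_A^h(e)$ is well defined; the same remark applies to every event considered below.

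\emph{First stage.} Applying fact \emph{(i)} to the generating family $\{\pi_f^{-1}(F):f\in E,\ F\in\sF\}$ of $\sF^E$ yields a countable family $F_1,F_2,\dots\in\sF$ with $A\in\s(\pi_f^{-1}(F_i):f\in E,\ i\ge1)\subseteq\sG^{\otimes E}$, where $\sG:=\s(F_1,F_2,\dots)$ is a countably generated sub-$\s$-field of $\sF$. Fix a measurable map $\phi\colon\Om\to[0,1]$ with $\s(\phi)=\sG$ (e.g.\ a ternary coding of $(1_{F_n})_{n\ge1}$), let $\mu:=\phi_*P$, and observe $\s(\phi^E)=\sG^{\otimes E}\ni A$, so that $A=(\phi^E)^{-1}(A')$ for some $A'\in(\sB[0,1])^{\otimes E}$ by the Doob--Dynkin lemma. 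The maps $\phi^E$ and $\phi^{E\sm\{e\}}$ push $\PP$ and $P^{E\sm\{e\}}$ forward to $\mu^E$ and $\mu^{E\sm\{e\}}$; hence, by change of variables, $\PP(A)=\mu^E(A')$, and for each $e$ one has $P(A\cap F_\psi)=\mu(A'\cap F_{\phi^{E\sm\{e\}}(\psi)})$ for $P^{E\sm\{e\}}$-almost every $\psi$, so that $I_A^h(e)$ equals the $h$-influence of $A'$ computed in $([0,1],\sB[0,1],\mu)^E$. This reduces Theorem~\ref{gjn2} to the case $X=([0,1],\sB[0,1],\mu)$.

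\emph{Second stage.} With $\tau$ as in fact \emph{(ii)}, set $B:=(\tau^E)^{-1}(A')\in(\sB[0,1])^{\otimes E}$, an event of $\sL^E$. Each fibre of $B$ is $\tau^{-1}$ of the corresponding fibre of $A'$, so $\la(B\cap F_x)=\mu(A'\cap F_{\tau^{E\sm\{e\}}(x)})$; since $\tau^E$ and $\tau^{E\sm\{e\}}$ push $\la^E$ and $\la^{E\sm\{e\}}$ forward to $\mu^E$ and $\mu^{E\sm\{e\}}$, a further change of variables gives $\la^E(B)=\mu^E(A')=\PP(A)$ and $I_B^h(e)=I_{A'}^h(e)=I_A^h(e)$ for every $e\in E$, which is what is claimed.

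The separate ingredients are routine; the delicate point is structural, namely that one cannot directly build a measure-preserving map from $[0,1]$ \emph{onto} the possibly ill-behaved space $(\Om,\sF)$, so the proof must detour through the code $\phi$ --- which replaces $(\Om,\sF)$ by the standard Borel space $([0,1],\sB[0,1])$ while keeping only the separable sub-$\s$-field $\sG$ to which $A$ belongs --- and then through the quantile map $\tau$. Consequently the substantive part is checking that both substitutions carry every section measure $P(A\cap F_\psi)$, and thereby the whole vector $(I_A^h(e))_{e\in E}$ of generalized influences, to the correct value; here the hypothesis $A\in\sF^E$ is used (for measurability of sections), as is the elementary fact that a product of measure-preserving maps sends a product measure to the product of the images.
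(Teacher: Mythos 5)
Your proposal is correct and follows essentially the same route as the paper: reduce to a countably generated sub-$\s$-field containing $A$ (the paper's Lemma \ref{lem1}), code it measurably into a standard space --- your ternary coding of the indicators is exactly the paper's Cantor-set map $\psi$ --- then pull back through the quantile map of the induced measure (the paper's $\gamma$), checking at each step that measure-preserving maps transfer fibre measures and hence $h$-influences (the paper's Lemma \ref{lem3}, which you verify inline). The only differences are cosmetic (target space $[0,1]$ with a Borel measure rather than the Cantor set, and the influence-transfer argument folded into the two stages rather than isolated as a lemma).
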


This extends Theorem \ref{gjn2-}, and yields a positive 
answer to a question of Keller \cite[Footnote 2]{Kell10},
asking whether $h$-influence inequalities may be extended from
Lebesgue to general spaces. 
Theorem \ref{gjn2} includes Theorem \ref{gjn2-},
and its proof is presented in Section \ref{sec:pf}.

\section{Discussion}\label{sec:hist}

Rather than include here a full discussion
of influence and sharp threshold, we draw the attention of the reader to three relevant points.

\subsection{Borel or Lebesgue?}
We have made no assumption above about the completeness (or not) of the probability space 
$X^E=(\Om^E,\sF^E,\PP)$. For events $A,B \in \sF^E$ such that $P(A \sym B)=0$,
we have from \eqref{g-def} and Fubini's theorem that $I_A(e)=
I_B(e)$ for $e \in E$. It follows that, when working with
definition \eqref{g-def}, one may use either the product 
$\sigma$-field $\sF^E$ or its completion.

\subsection{Form of inequality} There exists a family of influence inequalities, from which
one may select one according to the situation under study. By Theorems \ref{gjn2-}
and \ref{gjn2}, any inequality that is valid for the Lebesgue space has a parallel
inequality for a general product space. In these two theorems, no assumption is made of 
\emph{monotonicity}
of the event in question, or about its \emph{invariance} under a  group of actions on $\Om^E$. 

\subsection{General probability spaces}\label{ss3}
The probability space of possibly greatest practical value for applications is the 
Lebesgue space $\sL^E$, since many spaces of importance, including the Bernoulli product spaces,
may be derived via mappings on $\sL^E$.
It was implied by Bourgain et al.\  \cite{bkkkl} that influence inequalities 
for an \emph{arbitrary} product space may be derived from those for $\sL^E$.
A natural route to a proof of such
a statement would be to use the measure-space isomorphism theorem
(see, for example, \cite[{\S}40]{Hal}, \cite[App.\ A]{jan10}, or \cite[Thm 4.7]{Pet}). In its usual form,
the last theorem places a restriction of separability on the probability space after removal of atoms, and
this limits its naive application in the current  situation.
The \emph{separable} case is discussed in \cite[Sect.\ 4.5]{G-pgs}.

Some probabilists tend to consider non-separable probability spaces with only limited enthusiasm.
The current note was inspired by a desire to understand the assertion of \cite{bkkkl},
and to resolve a slightly obscure corner of probability theory.

\section{Proof of Theorem \ref{gjn2}}\label{sec:pf}

The proof of Theorem \ref{gjn2} is achieved via the three lemmas that follow.
For probability spaces $X_i=(\Om_i,\sF_i,P_i)$,  a mapping
$\phi:\Om_1 \to \Om_2$ is said to be \emph{measure preserving} (from $X_1$ to $X_2$)
if, for all
$B_2 \in \sF_2$, the inverse image $B_1=\phi^{-1}(B_2)$ is measurable
and satisfies $P_1(B_1) = P_2(B_2)$. 

For a finite set $E$ and a measure preserving mapping $\phi$, 
the function $\Phi=\phi^E$ is the measure preserving mapping from $X_1^E$ to $X_2^E$ given by
$\Phi((x_e:e\in E))=(\phi(x_e):e\in E)$.

\begin{lemma}\label{lem3} 
Let $X_i=(\Om_i,\sF_i,P_i)$, $i=1,2$, be probability spaces, and let
$\phi:\Om_1 \to \Om_2$ be measure preserving. Let $E$ be a finite set, and write $\Phi=\phi^E$ as above. 
If $B_2 \in \sF_2^E$
and $B_1=\Phi^{-1}(B_2)$, then $I_{B_1}^h(e) = I_{B_2}^h(e)$ for all $e\in E$
and $h \in \sM$. 
\end{lemma}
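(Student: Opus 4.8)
The plan is to unwind the definition \eqref{g-def2} of $h$-influence and to reduce everything to the single observation that $\Phi$ is measure-preserving, together with a compatibility between $\Phi$ and the fibre decomposition. Fix $e \in E$ and abbreviate $E' = E \sm \{e\}$. The map $\Phi = \phi^E$ factors as $\phi^{E'} \times \phi$, so for $\psi \in \Om_1^{E'}$ the fibre $F_\psi^{(1)} = \{\psi\} \times \Om_1$ in $X_1^E$ is carried by $\Phi$ into the fibre $F_{\phi^{E'}(\psi)}^{(2)} = \{\phi^{E'}(\psi)\}\times\Om_2$ in $X_2^E$; more precisely, the restriction of $\Phi$ to $F_\psi^{(1)}$ is, under the obvious identification of each fibre with $\Om_i$ (via the $e$-coordinate), just the map $\phi:\Om_1 \to \Om_2$.

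First I would establish the pointwise identity
\begin{equation}\label{fib-id}
P_1\bigl(B_1 \cap F_\psi^{(1)}\bigr) = P_2\bigl(B_2 \cap F_{\phi^{E'}(\psi)}^{(2)}\bigr),
\qquad \psi \in \Om_1^{E'}.
\end{equation}
This is immediate once one checks that, under the identification above, $B_1 \cap F_\psi^{(1)}$ corresponds to $\phi^{-1}\bigl(B_2 \cap F_{\phi^{E'}(\psi)}^{(2)}\bigr)$ as a subset of $\Om_1$ — which follows from $B_1 = \Phi^{-1}(B_2)$ and the factorization of $\Phi$ — and then applying the measure-preserving property of $\phi$. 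A small point worth spelling out is that the slice $B_1 \cap F_\psi^{(1)}$ is measurable in $\Om_1$ for (all, or at least $P_1^{E'}$-almost all) $\psi$; this is the standard fact that sections of a product-measurable set are measurable, used already implicitly in \eqref{g-def2}.

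Next I would compose with $h$ and integrate over $\psi$. Writing $g_i(\psi') = h\bigl(P_i(B_i \cap F_{\psi'}^{(i)})\bigr)$ for $\psi' \in \Om_i^{E'}$, identity \eqref{fib-id} says $g_1 = g_2 \circ \phi^{E'}$ on $\Om_1^{E'}$. Since $\phi^{E'}$ is measure-preserving from $X_1^{E'}$ to $X_2^{E'}$, the change-of-variables formula for measure-preserving maps gives
\begin{equation}\label{final-int}
I_{B_1}^h(e) = P_1^{E'}(g_1) = P_1^{E'}\bigl(g_2 \circ \phi^{E'}\bigr) = P_2^{E'}(g_2) = I_{B_2}^h(e),
\end{equation}
which is the assertion of the lemma; the argument is uniform in $e$ and in $h \in \sM$. (For \eqref{final-int} one only needs that $g_2$ is a bounded nonnegative measurable function on $\Om_2^{E'}$, which holds since $h$ maps into $[0,1]$.)

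I do not expect a serious obstacle here: the content is entirely bookkeeping about how $\Phi$ interacts with the fibre structure, plus two invocations of the definition of measure-preserving (once for $\phi$ at the level of slices, once for $\phi^{E'}$ at the level of the outer integral). The only place demanding a little care is the measurability of the sections $B_i \cap F_{\psi'}^{(i)}$ and of the functions $g_i$ — i.e.\ checking that $g_1 = g_2 \circ \phi^{E'}$ is a genuine equality of measurable functions, not merely a formal one — and the observation that $\phi^E$ and $\phi^{E'}$ really are measure-preserving on the respective product spaces (which is a standard fact about products of measure-preserving maps, applied to the generating rectangles and extended by a $\pi$-$\lambda$ argument). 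Everything else is routine.
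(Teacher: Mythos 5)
Your argument is correct and follows essentially the same route as the paper: you establish the fibre-level identity $P_1(B_1\cap F_\psi^{(1)})=P_2(B_2\cap F_{\phi^{E\sm\{e\}}(\psi)}^{(2)})$ using that $\phi$ is measure-preserving, and then transfer the outer integral via the measure-preserving map $\phi^{E\sm\{e\}}$. The only (cosmetic) difference is that the paper phrases the last step as equality of the tail probabilities $P_i^{E\sm\{e\}}\bigl(h(P_i(B_i\cap F_\cdot))>u\bigr)$ followed by integration over $u\ge 0$, whereas you invoke the change-of-variables formula directly; these are the same computation.
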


\begin{proof}
Let $e \in E$, $h \in \sM$,  $B_2 \in \sF_2$, and $B_1=\Phi^{-1}(B_2)$. 
For  $\psi\in \Om_i^{E\sm \{e\}}$, let
$F_\psi$ be the fibre
$$
F_\psi = \{\omega\in \Om_i^E: \omega(f) = \psi(f) \mbox{ for } f \ne e \}
\cong \{\psi\}\times \Om_i.
$$

Suppose $\nu \in \Om_1^{E\sm\{e\}}$, $\psi \in \Om_2^{E\sm\{e\}}$ satisfy 
$\phi^{E \sm \{e\}}(\nu)=\psi$.
Since $\phi$ is measure preserving on each component,
\begin{equation}\label{5}
P_1\Bigl(\{\nu\}\times \phi^{-1}(B_2 \cap F_\psi)\Bigr) = 
P_2(B_2 \cap F_\psi).
\end{equation}
Now $\{\nu\}\times \phi^{-1}(B_2 \cap F_\psi) = B_1 \cap F_\nu$, so that,
for $u\in\RR$,
$$
P_1^{E\sm\{e\}}\Big(h(P_1(B_1\cap F_\nu))>u\Bigr) = 
P_2^{E\sm \{e\}}\Big(h(P_2(B_2\cap F_\psi))>u\Bigr).
$$
We integrate over $u\in [0,\oo)$ to obtain the claim.
\qeds\end{proof}

A $\sigma$-field of subsets of a set $\Om$ is called
\emph{countably generated} (or \emph{separable}) if
it is generated by some finite or countably infinite collection of subsets
of $\Om$.  

\begin{lemma}\label{lem1} 
Let $X=(\Om,\sF,P)$, $|E|<\oo$, and let $A \in \sF^E$.
There exists a countably generated sub-$\s$-field $\sG$ of $\sF$ such that $A \in \sG^E$.
\end{lemma}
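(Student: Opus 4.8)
The plan is to build the sub-$\s$-field $\sG$ by an exhaustion argument, starting from $A$ itself and closing up under the countably many set-operations that are forced upon us. First I would observe that $A \in \sF^E$ means, by the construction of the product $\s$-field, that $A$ lies in the $\s$-field generated by the measurable rectangles $\prod_{e\in E} B_e$ with $B_e \in \sF$. Since $\sF^E$ is generated by the countable\footnote{Here I use that $E$ is finite, so a rectangle is specified by finitely many coordinates.}\emph{-operations} closure of these rectangles, and $A$ is a single element, $A$ can be obtained from \emph{some} sequence of rectangles by a countable sequence of complements and countable unions. Concretely, let $\sR$ be the collection of all measurable rectangles; then $\sF^E = \s(\sR)$, and $A \in \s(\sR)$. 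The standard fact I would invoke is that $\s(\sR) = \bigcup \{\s(\sR') : \sR' \subseteq \sR \text{ countable}\}$ — that is, every element of a generated $\s$-field already lies in the $\s$-field generated by countably many of the generators. This is itself proved by a quick exhaustion/good-sets argument: the right-hand side is a $\s$-field containing $\sR$.

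Granting that, there is a countable family $\sR' = \{R_1, R_2, \dots\}$ of measurable rectangles with $A \in \s(\sR')$. Each $R_i = \prod_{e\in E} B_{i,e}$ with $B_{i,e}\in\sF$, and since $E$ is finite this involves only countably many sets $B_{i,e}$ from $\sF$. I would then set $\sG = \s(\{B_{i,e} : i\ge 1,\ e\in E\})$, a countably generated sub-$\s$-field of $\sF$. By construction each $R_i$ is a $\sG$-rectangle, hence $R_i \in \sG^E$; therefore $\s(\sR') \subseteq \sG^E$, and in particular $A \in \sG^E$, as required.

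The one genuine point requiring care — and the step I would flag as the main obstacle — is the claim that a set in a generated $\s$-field already belongs to the $\s$-field generated by countably many of the generators. One must be slightly careful because the class of sets expressible via \emph{countably many} generators is not obviously closed under countable unions (a countable union of such sets uses countably-many-times-countably-many generators, which is still countable — so in fact it \emph{is} closed, and that is exactly the point). The clean way to phrase it: let $\sC = \bigcup_{\sR' \text{ ctble}} \s(\sR')$; then $\sR \subseteq \sC$ trivially, $\sC$ is closed under complements (a complement stays within the same $\s(\sR')$), and $\sC$ is closed under countable unions (given $C_n \in \s(\sR'_n)$, take $\sR' = \bigcup_n \sR'_n$, still countable, and $\bigcup_n C_n \in \s(\sR')$); hence $\sC$ is a $\s$-field containing $\sR$, so $\sC \supseteq \s(\sR) = \sF^E$, giving $A\in\sC$. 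Once this lemma is in hand the rest is bookkeeping, using finiteness of $E$ to keep all the index sets countable.
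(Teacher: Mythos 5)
Your proof is correct and is essentially the paper's argument: both rest on the same exhaustion, namely that the collection of sets lying in a $\s$-field determined by countably many generators is itself a $\s$-field containing all measurable rectangles, hence equals $\sF^E$. The paper packages this by taking the union $\bigcup_i \sG_i^E$ over all countably generated sub-$\s$-fields $\sG_i$ of $\sF$, while you take the union of $\s(\sR')$ over countable families $\sR'$ of rectangles and then pass to the sub-$\s$-field of $\sF$ generated by their sides (using finiteness of $E$); these describe the same class of sets, so the two write-ups differ only in bookkeeping.
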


\begin{proof}
Let $\{\sG_i: i \in I\}$ be the set of all countably generated
sub-$\s$-fields of the $\s$-field $\sF$, and let $\sH$ be the union of $\sG_i^E$ as $i$ ranges over $I$.
It is easy to see that $\sH$ is a $\s$-field.
(To see closure under countable unions: let $A_i\in \sH$ for $i=1,2,\dots$.
Then $A_i \in \sG_{j(i)}^E$ for some $j(i)$. Let $\sG_j$ be
generated by the countable subset $\sB_j$ of $\sF$, and 
let $\sB = \bigcup_i \sB_{j(i)}$. Then $\sB$ is countable, and generates thus some $\sG_k$. Hence,
$A_i \in \sG_{j(i)}^E \subseteq \sG_k^E$ for each $i$, so that 
$\bigcup _i A_i \in \sG_k^E\subseteq \sH$.)
Furthermore, $\sH$ is the smallest $\s$-field
containing every \emph{rectangle} of the form $\prod_{e\in E} F_e$, as the $F_e$ range over $\sF$.
Therefore, $\sH=\sF^E$.

Let $A \in \sF^E$. Since $A \in \sH$, there exists $a\in I$ such that $A \in \sG_a^E$.
\qeds\end{proof}

The remainder of the proof is based upon a
concealed version of the measure-space isomorphism theorem. 
In general terms, this last states that (subject to appropriate) 
assumptions) a measure space may be placed in 
correspondence with the Lebesgue space $\sL$. There
are two forms of the measure-space isomorphism theorem.
\begin{letlist}
\item  There exists an isomorphism between the measure rings  
of the measure space and the Lebesgue space (see, for example, \cite[{\S}40]{Hal}). 
\item  There exists a pointwise bijection between 
certain derived sample spaces (see, for example,  \cite[Thm 4.7]{Pet}). 
\end{letlist}

We will not appeal to any general theorem here, but instead
will construct the required mappings explicitly in
a manner requiring
no special consideration of the existence (or not) of atoms.  
This may be achieved either by repeated decimation of sub-intervals of $[0,1]$
(see, for example,  \cite[Sect.\ 2.2]{Rud}),
or by way of a mapping to the Cantor set. We choose to follow the second route here.
See \cite[App.\ A]{jan10} for a discussion of measure-space isomorphisms. 

For $T \subseteq \RR^d$, we denote the Borel $\s$-field of $T$ by $\sB(T)$.
Let $C$ be the Cantor set of all reals of the form 
$$
\sum_{k=1}^\oo \frac 2{3^{k}} a_k,\qquad
(a_k:k \in \NN)\in\{0,1\}^\NN.
$$
We shall make use of the fact that  $C$ is in one-to-one
correspondence with $\{0,1\}^\NN$. 

\begin{lemma}\label{lem2}
Let $A \in \sF^E$, and let $\sG$ be a countably generated sub-$\s$-field
of $\sF$ such that $A \in \sG^E$ (as in Lemma \ref{lem1}). There exists a probability space 
$Z = (C,\sB(C),\mu)$ comprising the Cantor set $C$ together with
its Borel $\s$-field and a suitable probability measure $\mu$, such that following hold.
\begin{letlist}
\item There exists a measure-preserving mapping
$\psi$ from $X$ to $Z$.
\item There exists $G \in \sB(C^E)$ such that  $A=\Psi^{-1}(G)$, where $\Psi=\psi^E$.
\item There exists a measure-preserving mapping
$\gamma$ from $\sL$ to $Z$.
\end{letlist} 
\end{lemma}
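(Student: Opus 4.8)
The plan is to construct the Cantor-set model of the $\sigma$-field $\sG$ explicitly, exploiting the fact (from Lemma \ref{lem1}) that $\sG$ is countably generated. Fix a countable generating family $\{G_1,G_2,\dots\}\subseteq \sF$ for $\sG$, and define $\psi\colon\Om\to\{0,1\}^\NN$ by $\psi(\om)=(1_{G_1}(\om),1_{G_2}(\om),\dots)$. Identifying $\{0,1\}^\NN$ (or a suitable subset of it) with the Cantor set $C\subseteq[0,1]$ via the usual base-$3$ embedding, this map is measurable, and $\psi^{-1}(\sB(C))$ is precisely the sub-$\sigma$-field of $\sF$ generated by the $G_i$, namely $\sG$. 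Let $\mu$ be the pushforward $\mu=P\circ\psi^{-1}$ on $\sB(C)$; then $\psi$ is measure-preserving from $X$ to $Z=(C,\sB(C),\mu)$ by definition, giving part (a). For part (b), note that since $A\in\sG^E=\psi^{-1}(\sB(C))^{E}$, and $\Psi=\psi^E$ satisfies $\Psi^{-1}(\sB(C)^E)=(\psi^{-1}\sB(C))^E=\sG^E$ (the inverse image of a product $\sigma$-field under a product map is the product of the inverse images, because it is generated by rectangles), there exists $G\in\sB(C^E)$ with $A=\Psi^{-1}(G)$, using here that $\sB(C)^E=\sB(C^E)$ since $C$ is second countable.

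The remaining task is part (c): producing a measure-preserving map $\gamma$ from the Lebesgue space $\sL=([0,1],\sB[0,1],\la)$ onto $Z$. This is where the bulk of the work lies, and it is the step I expect to be the main obstacle, because $\mu$ may have atoms and its support may be a complicated closed subset of $C$, so one cannot simply quote a standard isomorphism theorem for nonatomic spaces. The approach is to build $\gamma$ by the ``decimation'' construction: use the tree structure of $C$ (nodes indexed by finite binary strings $w$, with cylinder sets $C_w$) and recursively assign to each cylinder $C_w$ a subinterval $J_w\subseteq[0,1]$ such that $\la(J_w)=\mu(C_w)$, the children $J_{w0},J_{w1}$ partition $J_w$ in the correct proportions, and $\bigcap_n J_{w|n}$ is a single point whenever the corresponding $\mu$-measure tends to a positive limit (i.e.\ at atoms of $\mu$). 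Setting $\gamma(x)$ to be the unique point of $C$ whose cylinders' images contain $x$ — which is well-defined for all $x\in[0,1]$ outside the countable set of shared interval endpoints, and can be defined arbitrarily there — one checks that $\gamma^{-1}(C_w)=J_w$ up to a null set, hence $\la(\gamma^{-1}(C_w))=\mu(C_w)$, and since the cylinders generate $\sB(C)$ and form a $\pi$-system, $\gamma$ is measure-preserving. Measurability of $\gamma$ follows since the preimage of each generating cylinder is (a.e.) an interval.

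One subtlety to handle carefully in part (c) is the treatment of atoms: if $\mu(\{p\})=\beta>0$ for some $p=\bigcap_n C_{w|n}\in C$, then the nested intervals $J_{w|n}$ shrink to a closed interval of length $\beta$ rather than a point, and $\gamma$ must send that whole interval to $p$; this is consistent with the construction above and causes no measure-theoretic problem. A second subtlety is that $\mu$ need not have full support in $C$ — some cylinders $C_w$ may have $\mu(C_w)=0$, in which case $J_w$ degenerates to a point (or empty interval) and contributes nothing; the recursion simply skips such branches. With these points addressed, $\gamma$ is the desired measure-preserving map, completing the proof of Lemma \ref{lem2}. Combining the three parts, $\psi$ and $\gamma$ together (via Lemma \ref{lem3}) will let us transport the event $A$ through $Z$ to the event $B=\Gamma^{-1}(G)$ in $\sL^E$, where $\Gamma=\gamma^E$, which is exactly what is needed to deduce Theorem \ref{gjn2} from the Lebesgue case.
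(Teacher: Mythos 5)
Your parts (a) and (b) are essentially the paper's argument: encode the countable generating family of $\sG$ by indicators into $\{0,1\}^\NN$, embed in the Cantor set $C$ by the base-$3$ map, push $P$ forward to get $\mu$, and observe that $\Psi^{-1}(\sB(C^E))=\sG^E$ because both $\sigma$-fields are generated by rectangles (your explicit remark that $\sB(C)^E=\sB(C^E)$, by second countability, is a point the paper leaves implicit). Where you diverge is part (c). The paper defines $\kappa(c)=\mu(C\cap[0,c])$ and takes $\gamma(y)=\inf\{c:\kappa(c)\ge y\}$, the generalized inverse of the distribution function; the single equivalence $\gamma(y)\le c \iff y\le\kappa(c)$ then gives $\la(\gamma^{-1}(C\cap[0,c]))=\mu(C\cap[0,c])$ on a generating $\pi$-system, with atoms requiring no separate discussion. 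You instead build $\gamma$ by the recursive interval-assignment (decimation) scheme over the cylinder tree of $C$ --- a route the paper itself mentions as a viable alternative --- and your construction is correct: the identification $\gamma^{-1}(C_w)=J_w$ up to the countable set of endpoints, together with the fact that the cylinders form a generating $\pi$-system, yields measure preservation, and your handling of atoms (nested intervals shrinking to an interval of length $\mu(\{p\})$, all mapped to $p$) and of null cylinders is sound. The trade-off is that your route needs these case checks and an almost-everywhere bookkeeping argument, whereas the quantile map disposes of (c) in three lines. One small slip to fix: your first description says $\bigcap_n J_{w|n}$ is a single point ``whenever the corresponding $\mu$-measure tends to a positive limit''; that is backwards (a positive limit is exactly the atomic case in which the intersection is an interval of positive length), though your later paragraph on atoms states it correctly, so this is a wording error rather than a gap.
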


This lemma (together with part of the forthcoming proof
of Theorem \ref{gjn2}) may be summarized in the diagrams
\begin{equation}\label{diagram}
\begin{CD}
X @>\psi>>   Z @<\gamma<<  \sL,
\phantom{>>>} A @<\Psi^{-1}<< G @>\Gamma^{-1}>> B,
\end{CD}
\end{equation}
where $\Gamma=\gamma^E$ and $B=\Gamma^{-1}(G)$.

\begin{proof}

(a) 
The existence of $\sG$ is implied by Lemma \ref{lem1}.
Since $\sG$ is finitely generated, we may find subsets $(B_k: k \in \NN)$ of $\Om$
that generate $\sG$. Define  $\psi: \Om \to C$ by
$$
\psi(x) = \sum_{k=1}^\oo \frac 2{3^k} 1_{B_k}(x),
$$
where $1_B$ is the indicator function of $B$, as usual.

Write $\sG'=\{\psi^{-1}(S): S \in \sB(C)\}$. We claim that $\sG= \sG'$.
Since $B_k \in \sG'$ for all
$k$, we have $\sG \subseteq \sG'$. Conversely, since $\psi$ is a sum of $\sG$-measurable
functions, it is $\sG$-measurable, and hence $\sG' \subseteq \sG$.

Let $\mu$ be the probability measure on $(C,\sB(C))$ induced by $\psi$, 
that is $\mu(S) = P(\psi^{-1}(S))$
for $S \in \sB(C)$.  By definition of $\mu$, $\psi$ is measure-preserving
from $X$ to $Z=(C,\sB(C),\mu)$.

\noindent
(b)  Let $\sH$ be the $\s$-field $\{\Psi^{-1}(S) : S \in \sB(C^E)\}$ on $\Om^E$.
By the above, $\sH=\sG^E$.
Consequently, $A \in \sH$, and hence $A = \Psi^{-1}(G)$ for
some $G \in \sB(C^E)$.

\noindent
(c) 
Define $\kappa: C \to[0,1]$ by $\kappa(c) = \mu(C \cap [0,c])$. 
We may take as inverse the function 
$$
\gamma(y) = \inf\{c: \kappa(c) \ge y\}, \qquad y \in [0,1].
$$
Since $\gamma(y) \le c$ if and only if $y \le \kappa(c)$, we have that
$$
\gamma^{-1}(C \cap [0,c]) = [0,\kappa(c)], \qquad c \in C,
$$
so that
$$
\la\bigl(\gamma^{-1}(C \cap [0,c])\bigr) = \kappa(c) = \mu(C \cap [0,c]).
$$
The set $\{C \cap [0,c] : c \in C\}$ is a $\pi$-system that generates $\sB(C)$, and hence
$\gamma$ is measure-preserving from $\sL$ to $Z$. 
\qeds\end{proof}

\begin{proof}[Proof of Theorem \ref{gjn2}]
Let $h\in\sM$, $A\in\sF^E$. We shall use the notation introduced in
Lemmas \ref{lem1}--\ref{lem2}, and we refer the reader
to the diagram \eqref{diagram}.
By Lemmas \ref{lem3} and \ref{lem2}(a, b), $A$ and $G$ have equal measure and $h$-influences.
Write $\Ga=\gamma^E$, and take $B = \Ga^{-1}(G) \subseteq [0,1]^E$.
Since $\Gamma$ is measure-preserving, 
by Lemma \ref{lem3}, $G$ and $B$ have equal probability and $h$-influences.
\qeds\end{proof}

\begin{proof}[Proof of Theorem \ref{gjn}]
This is an immediate corollary of Theorem \ref{gjn2}, on
applying the corresponding result for the Lebesgue space.
The latter result is implied by the work of BKKKL  \cite{bkkkl}, and is explicit
at \cite[Thm 4.33]{G-pgs} (the factor $2$ present in the last reference is
cosmetic only).
\qeds\end{proof}

\section*{Acknowledgements}

The work of GRG and JRN was supported in part by the EPSRC under grant EP/103372X/1,
and of SJ by the Knut and Alice Wallenberg Foundation.

\bibliographystyle{apt}
\bibliography{gjn5}

\end{document}